\newtheorem{definition}{Definition}[section]
\newtheorem{theorem}[definition]{Theorem}
\newtheorem{lemma}[definition]{Lemma}
\newtheorem{remark}[definition]{Remark}
\newcommand{\R}{\mathbb{R}}                             
\newcommand{\embed}{\hookrightarrow}                    
\DeclareMathOperator{\LspaceSymbol}{\mathbf{L}}
\newcommand{\Lpspace}[1][p]{\LspaceSymbol^{{#1}}}       
\newcommand{\LPhispace}[1][\DefaultPhifunction]{\LspaceSymbol^{{#1}}}
\DeclareMathOperator{\WspaceSymbol}{\mathbf{W}}
\newcommand{\WLGspace}[1][\DefaultGfunction] { {\WspaceSymbol^1}\LspaceSymbol^{{#1}} }
\newcommand{\WLPhispace}[1][\DefaultPhifunction] { {\WspaceSymbol^1}\LspaceSymbol^{{#1}} }
\newcommand{\WLGzspace}[1][\DefaultGfunction]{ {\WspaceSymbol^1_0}\LspaceSymbol^{{#1}}}
\newcommand{\norm}[1]{\|#1\|}                           
\newcommand{\LPhinorm}[3][\DefaultPhifunction]{\norm{#2}_{\LPhispace[{#1}]({#3})}}														                                          	
 \newcommand{\WLPhinorm}[3][\DefaultPhifunction]{\norm{#2}_{\WLPhispace[{#1}]({#3})}}
\title{Generalized version of the Lions-type lemma

}
\author{
  Magdalena Chmara\\
  Department of Technical Physics and Applied Mathematics, \\
Gda\'{n}sk University of Technology,  \\
Narutowicza 11/12, 80-233 Gda\'{n}sk, Poland \\
  \texttt{magdalena.chmara@pg.edu.pl} \\
 }
\begin{document}
\maketitle

\begin{abstract}
	In this short paper, I recall the history of dealing with the lack of compactness of a  sequence in the case of an unbounded domain and prove the vanishing Lions-type result  for a sequence of Lebesgue-measurable functions. This lemma generalizes some results for a class of Orlicz-Sobolev spaces. What matters here is the behavior of the integral, not the space. \end{abstract}

\keywords{Lions-type result \and  concentration-compactness \and unbounded domains}

\section{Introduction}

In 1984 P.L. Lions published his celebrated article \cite{Lio84}, in which he introduced a concentration-compactness method for solving minimization problems on unbounded domains. One of the main tools  provided by  \cite{Lio84} is  lemma I.1.  A variety of formulations of this lemma has been widely used to deal with the lack of compactness on unbounded domain for different types of equations.
In \cite[p. 102]{Cos07} we can find the following version of the Lion's Lemma:
\begin{lemma}\label{lem:liolap}
	Suppose  $\{u_n\}\in\mathbf{H}^1(\R^N)$ is a bounded sequence satisfying 
	\[
	\lim_{n\to\infty}\left(\sup_{y\in\R^N}\int_{B_r(y)}|u_n|^p\right)=0
	\]
	for some $p\in[2,2^{\ast}]$ and $r>0$, where $B_r(y)$ denotes the open ball of radius $r$ centered at $y\in\R^N$. Then $u_n\to0$ strongly in $\Lpspace[q](\R^N)$ for all $2<q<2^{\ast},$ where $2^{\ast}$ is the limiting exponent in the Sobolev embedding  $\mathbf{H}^1(\R^N)\embed\Lpspace(\R^N)$.
\end{lemma}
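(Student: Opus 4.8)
The plan is to reduce the hypothesis, stated for a single exponent $p\in[2,2^{\ast}]$, to the case $p=2$, and then run a local-interpolation-plus-covering argument for each fixed target exponent. \textbf{Reduction to $p=2$.} Since every ball $B_r(y)$ has finite Lebesgue measure and $p\ge 2$, Hölder's inequality gives $\int_{B_r(y)}|u_n|^2\le |B_r(0)|^{1-2/p}\big(\int_{B_r(y)}|u_n|^p\big)^{2/p}$ with a constant independent of $y$. Taking the supremum over $y$, the vanishing of $\sup_y\int_{B_r(y)}|u_n|^p$ forces $\delta_n:=\sup_{y\in\R^N}\int_{B_r(y)}|u_n|^2\to 0$, so it suffices to treat $p=2$.

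\textbf{Local estimate.} Fix $q\in(2,2^{\ast})$ and write $\tfrac1q=\tfrac{1-\lambda}{2}+\tfrac{\lambda}{2^{\ast}}$ with $\lambda\in(0,1)$. On each ball $B_r(y)$ the interpolation inequality yields $\norm{u_n}_{\Lpspace[2](B_r(y))}^{1-\lambda}\norm{u_n}_{\Lpspace[2^{\ast}](B_r(y))}^{\lambda}$ as an upper bound for $\norm{u_n}_{\Lpspace[q](B_r(y))}$. I would \emph{keep} the $\Lpspace[2^{\ast}]$ factor rather than convert it to a gradient term, so as to exploit the global Sobolev bound $\mathbf{H}^1(\R^N)\embed\Lpspace[2^{\ast}](\R^N)$ together with boundedness of $\{u_n\}$. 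Raising to the power $q$ gives, on every ball, $\int_{B_r(y)}|u_n|^q\le \norm{u_n}_{\Lpspace[2](B_r(y))}^{(1-\lambda)q}\,\norm{u_n}_{\Lpspace[2^{\ast}](B_r(y))}^{\lambda q}$.

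\textbf{Covering and summation.} Next I would cover $\R^N$ by balls $\{B_r(y_i)\}_i$ of radius $r$ with uniformly bounded overlap, so that no point lies in more than $M=M(N)$ of them. Finite overlap yields the two summability bounds $\sum_i\norm{u_n}_{\Lpspace[2](B_r(y_i))}^{2}\le M\,\Lpnorm[2]{u_n}^2$ and $\sum_i\norm{u_n}_{\Lpspace[2^{\ast}](B_r(y_i))}^{2^{\ast}}\le M\,\Lpnorm[2^{\ast}]{u_n}^{2^{\ast}}$, whose right-hand sides are bounded uniformly in $n$ via the $\mathbf{H}^1$-bound and the Sobolev embedding. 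Summing the local estimate over $i$ and applying Hölder's inequality for series with a conjugate pair $(s,s')$ chosen so that $\lambda q\,s'=2^{\ast}$ (admissible because $\lambda q<2^{\ast}$), I separate the bounded $\Lpspace[2^{\ast}]$-mass from the $\Lpspace[2]$-mass; a short computation using $q>2$ shows the residual exponent on the $\Lpspace[2]$ terms is at least $2$, so one may extract a factor $\sup_i\norm{u_n}_{\Lpspace[2](B_r(y_i))}^{\beta}\le \delta_n^{\beta/2}$ for some $\beta>0$ while leaving a summable remainder. This gives $\Lpnorm[q]{u_n}^q\le C\,\delta_n^{\beta/2}\to0$, i.e. $u_n\to 0$ strongly in $\Lpspace[q](\R^N)$.

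\textbf{Main obstacle.} The delicate part is the exponent bookkeeping in the final step: one must check that the Hölder-for-series exponents are admissible and that the leftover power on the small $\Lpspace[2]$ factor is nonnegative, both of which hold precisely because $q$ lies \emph{strictly} between $2$ and $2^{\ast}$ (the two conditions degenerate to equalities at the endpoints). The second point requiring care is uniformity: the Sobolev constant on $B_r(y)$ and the overlap constant $M$ must be independent of the centers, which is ensured by translation invariance and by using a lattice-type covering.
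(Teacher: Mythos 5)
Your first three steps (the reduction to $p=2$ via H\"older on balls, the local interpolation, and the bounded-overlap covering) are all sound, but the final exponent count fails --- and it fails identically, not marginally. Multiply $\frac1q=\frac{1-\lambda}{2}+\frac{\lambda}{2^{\ast}}$ by $q$ to get $\frac{(1-\lambda)q}{2}+\frac{\lambda q}{2^{\ast}}=1$: this says precisely that $s=\frac{2}{(1-\lambda)q}$ and $s'=\frac{2^{\ast}}{\lambda q}$ are H\"older conjugates. So once you impose $\lambda q\,s'=2^{\ast}$, conjugacy forces $(1-\lambda)q\,s=2$ exactly; your residual exponent $\beta=(1-\lambda)qs-2$ equals $0$, nothing is extracted, and the final display collapses to $\Lpnorm[q]{u_n}^q\le M\,\Lpnorm[2]{u_n}^{(1-\lambda)q}\,\Lpnorm[2^{\ast}]{u_n}^{\lambda q}$ --- the global interpolation inequality, which only yields boundedness. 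Worse, no other admissible pair exists: if you require $(1-\lambda)qs=a\ge 2$ and $\lambda qs'=b\ge 2^{\ast}$ (both are needed for the finite-overlap sums to close), then $\frac{(1-\lambda)q}{a}+\frac{\lambda q}{b}\le\frac{(1-\lambda)q}{2}+\frac{\lambda q}{2^{\ast}}=1$ with equality only at $a=2$, $b=2^{\ast}$, so conjugacy leaves no room for any positive power of $\delta_n$. The variant where you instead pull out $\sup_i\norm{u_n}_{\Lpspace[2](B_r(y_i))}^{(1-\lambda)q-2}$ before summing is blocked by the same arithmetic, since $(1-\lambda)q=\frac{2(2^{\ast}-q)}{2^{\ast}-2}<2$ whenever $q>2$.

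The standard repair (Lions, Willem) is to let the summed factor be the local $\mathbf{H}^1(B_r(y_i))$ norm rather than the local $\Lpspace[2^{\ast}]$ norm, and to interpolate to one specially chosen exponent $s_0\in(2,2^{\ast})$ for which that factor carries the power exactly $2$; the \emph{entire} small factor $\sup_i\norm{u_n}_{\Lpspace[2](B_r(y_i))}^{(1-\lambda)s_0}$ is then pulled out of the sum as a supremum (none of it needs to remain summable), giving $u_n\to0$ in $\Lpspace[s_0](\R^N)$, and the full range $2<q<2^{\ast}$ follows by a single global interpolation against the bounded $\Lpspace[2]$ and $\Lpspace[2^{\ast}]$ norms. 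Note also that the paper does not prove Lemma \ref{lem:liolap} itself (it is quoted from \cite{Cos07}); its own Theorem \ref{thm:lions} recovers it by a completely different and more elementary device --- taking $\Phi_1=|\cdot|^2$, $\Phi_2=|\cdot|^{2^{\ast}}$, $\Psi=|\cdot|^q$ and splitting $\R^N$ into the three sets $\{|u_n|\le\delta\}$, $\{\delta<|u_n|<T\}$, $\{|u_n|\ge T\}$, with the vanishing hypothesis used only to control the measure of the middle set --- so no interpolation or covering exponents appear at all.
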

This version of the lemma has been used to solve semilinear elliptic equation in the whole space $\R^N$, i.e. 
\[
-\Delta u+u=h(u),~ u\in\mathbf{H}^1(\R^N).
\]

 In \cite{Lew10}  and  \cite{Str08} you can find a comprehensive description of the lack of compactness in Sobolev spaces

The Lions Lemma has been generalized in some ways, for example in \cite{AlvFigGioSan14} we can find the formulation of the  lemma for isotropic Orlicz-Sobolev spaces $\WLGzspace[A](\R^N)$, i.e. spaces obtained by the completion of  $C_0^{\infty}(\R^N)$ with respect to the norm
$\WLPhinorm[A]{u}{\R^N}=\LPhinorm[A]{|\nabla u|}{\R^N}+\LPhinorm[A]{u}{\R^N}$, where 
 
\[
\LPhinorm[A]{u}{\R^N}=\inf\left\{k>0: \int_{\R^N} A\left(\frac{|u|}{k}\right)\, dt\leq 1
\right\},
\]	is a Luxemburg norm,  $A:\R\to[0,\infty)$ is an N-function (i.e.  is  convex, even, coercive and vanishes only at 0) satisfying $\Delta_2\nabla_2$ condition (i.e. there exist $K_1,K_2>0$, such that $K_1A(v)\leq A(2v)\leq K_2A(v)$ for all $v\in \R^n$).

\begin{lemma}[Theorem. 1.3 in \cite{AlvFigGioSan14}]
	\label{lem:lio:iso}
Assume that $a(t)t$ is increasing in $(0,+\infty)$ and that there exist $l,m\in(1,N)$ such that 

\begin{equation}
\label{ass:aAlm}
l\leq\frac{a(|t|)t^2}{A(t)}\leq m\quad \text{for all }t\neq0,
\end{equation}
where $A(t)=\int_0^{|t|}a(s)s\,ds,$ $l\leq m<l^{\ast}=\frac{lN}{N-l}$.
Let $\{u_n\}\subset\WLGspace[A](\R^N)$ be a bounded sequence such that there exists $R>0$ satisfying:
	\[
	\label{cond:vanishing}
	\tag{$L_1$}
\lim_{n\to\infty}\left(\sup_{y\in\R^N}\int_{B_r(y)}A(|u_n|)\right)=0.
\]
Then, for any N-function $B$ verifying $\Delta_2$-condition and satisfying 
\[\lim_{t\to 0}\frac{B(t)}{A(t)}=0\quad\text{and}\quad \lim_{t\to \infty}\frac{B(t)}{A^{\ast}(t)}=0,\] where $A^{\ast}$ is a Sobolev conjugate of $A$, w have
\[
u_n\to 0 \text{ in }\LPhispace[B](\R^N).\]
\end{lemma}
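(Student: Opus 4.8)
The plan is to reduce the assertion to a modular convergence and then to localize by a finite‑overlap covering so that the classical Lions mechanism can be run ball by ball. Since $B$ satisfies the $\Delta_2$‑condition, convergence $u_n\to0$ in $\mathbf{L}^{B}(\R^N)$ is equivalent to the vanishing of the modular, $\int_{\R^N}B(|u_n|)\,dx\to0$, so it suffices to prove the latter. First I would record the quantitative consequences of \eqref{ass:aAlm}: integrating the two‑sided bound $l\le a(|t|)t^2/A(t)\le m$ shows that $A$ is squeezed between the powers $t^l$ and $t^m$, that $A\in\Delta_2\cap\nabla_2$, and, through the Orlicz--Sobolev embedding $\mathbf{W}^1\mathbf{L}^{A}(\R^N)\embed\mathbf{L}^{A^\ast}(\R^N)$ available under $l\le m<l^\ast$, that $A^\ast$ is likewise trapped between $t^{l^\ast}$ and $t^{m^\ast}$. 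Consequently the boundedness of $\{u_n\}$ in $\mathbf{W}^1\mathbf{L}^{A}(\R^N)$ yields a uniform bound $\int_{\R^N}\bigl(A(|u_n|)+A(|\nabla u_n|)\bigr)\,dx+\int_{\R^N}A^\ast(|u_n|)\,dx\le C_0$.

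Next I would split according to the size of $u_n$. Fix $\varepsilon>0$ and use the two hypotheses on $B$ to choose $0<\delta<M$ with $B(t)\le\varepsilon A(t)$ for $t\le\delta$ and $B(t)\le\varepsilon A^\ast(t)$ for $t\ge M$. Then
\[
\int_{\{|u_n|<\delta\}}B(|u_n|)\le\varepsilon\int_{\R^N}A(|u_n|)\le\varepsilon C_0,\qquad\int_{\{|u_n|\ge M\}}B(|u_n|)\le\varepsilon\int_{\R^N}A^\ast(|u_n|)\le\varepsilon C_0,
\]
so the very small and the very large values are controlled once and for all. The whole difficulty is concentrated in the intermediate range $\{\delta\le|u_n|<M\}$, on which $A$, $A^\ast$ and $B$ are all comparable to one another and to any fixed power $t^s$ with $l\le s\le m$, since they are continuous and positive there.

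On this bounded range the problem collapses to a single‑exponent Lions estimate: it suffices to show $\int_{\{\delta\le|u_n|<M\}}A(|u_n|)\,dx\to0$, equivalently that $u_n\mathbf{1}_{\{\delta\le|u_n|<M\}}\to0$ in $\mathbf{L}^{s}(\R^N)$ for a fixed $s\in[l,m]$. Here I would cover $\R^N$ by balls $B_r(y_i)$ of the hypothesis radius with uniformly finite overlap (at most $\kappa=\kappa(N)$ balls through any point). On each ball the vanishing hypothesis \eqref{cond:vanishing} gives $\int_{B_r(y_i)}A(|u_n|)\le\eta_n$ with $\eta_n\to0$, and I would interpolate the local $\mathbf{L}^{s}$‑mass on $B_r(y_i)$ between this small quantity $\eta_n$ and the higher integrability furnished by $\mathbf{W}^1\mathbf{L}^{A}(B_r(y_i))\embed\mathbf{L}^{A^\ast}(B_r(y_i))$, then sum over $i$ using finite overlap and the global bound $C_0$. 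The gap $m<l^\ast$ is exactly what lets $s$ be placed strictly between the lower exponent of $A$ and the upper exponent of $A^\ast$, so that a genuine interpolation carrying a strictly positive power of $\eta_n$ is available.

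The delicate point, which I expect to be the main obstacle, is the exponent bookkeeping in this summation: a naive interpolation leaves the globally bounded factor raised to a power below one, for which summability over the infinite cover fails. The safe route is to tune the interpolation so that the globally bounded factor carries power exactly one — this forces a specific intermediate exponent, which the gap $m<l^\ast$ permits — so that finite overlap turns the sum into a multiple of $\kappa C_0$ while a positive power $\eta_n^{\gamma}$ is extracted. Combining this single‑exponent result with the size‑splitting above gives $\int_{\R^N}B(|u_n|)\le 2\varepsilon C_0+o(1)$; letting first $n\to\infty$ and then $\varepsilon\to0$ yields $\int_{\R^N}B(|u_n|)\,dx\to0$, hence $u_n\to0$ in $\mathbf{L}^{B}(\R^N)$.
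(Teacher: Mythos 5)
Your overall architecture --- reduce to modular convergence via $\Delta_2$, obtain the uniform bound on $\int_{\R^N}A^\ast(|u_n|)$ from the Sobolev embedding, and split $\R^N$ according to whether $|u_n|$ is small, intermediate, or large, controlling the small part by $\varepsilon\int A(|u_n|)$ and the large part by $\varepsilon\int A^\ast(|u_n|)$ --- coincides with the paper's Theorem~2.1, of which this lemma is the specialization $\Phi_1=A$, $\Phi_2=A^\ast$, $\Psi=B$. Where you genuinely diverge is the intermediate range $\{\delta\le|u_n|<M\}$. The paper does not cover and interpolate at all: it uses that $A$ has a positive minimum $c_\Phi$ on the annulus $\delta\le|v|\le M$, so that $c_\Phi\,|\{\delta\le|u_n|<M\}\cap B_r(y)|\le\int_{B_r(y)}A(|u_n|)$, deduces from \eqref{cond:vanishing} that the measure of the intermediate set tends to zero, and then bounds the integral of $B(|u_n|)$ there by $\max_{\delta\le t\le M}B(t)$ times that measure. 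That route uses neither the gradient, nor a finite-overlap covering, nor any interpolation inequality, which is precisely what allows the paper to restate the result for arbitrary measurable sequences with two modular bounds; its delicate point is instead the passage from the local vanishing to the smallness of the global measure of the intermediate set. Your route is the classical Lions/\cite{AlvFigGioSan14} one and is viable, but two things must be nailed down before it is a proof. First, $u_n\mathbf{1}_{\{\delta\le|u_n|<M\}}$ is not a Sobolev function, so the local Gagliardo--Nirenberg-type inequality cannot be applied to the truncation; it must be applied to $u_n$ itself (or to the Lipschitz truncation $\min(|u_n|,M)$), with the intermediate-range integral dominated by the full local $\LspaceSymbol^{s}$ mass. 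Second, the step you defer as ``exponent bookkeeping'' is the entire content of the covering argument: one must exhibit a specific $s$ and a local inequality of the form $\int_{B_r(y_i)}|u_n|^{s}\le C\bigl(\int_{B_r(y_i)}A(|u_n|)\bigr)^{\gamma}\bigl(1+\WLGnorm[A]{u_n}_{(B_r(y_i))}^{p}\bigr)$ in which the second factor carries exactly the power that makes its sum over the cover collapse to $\kappa$ times the global bound, and this is where the hypotheses $A\in\Delta_2\cap\nabla_2$ (from \eqref{ass:aAlm}) and $m<l^\ast$ actually do their work. You have identified both the mechanism and the obstruction correctly, so I would call this a sound plan along a heavier, more classical route; the paper's measure-theoretic shortcut for the intermediate range is worth comparing against, since it eliminates the covering argument altogether at the price of having to justify the decay of the intermediate set's measure.
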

In \cite{AlvFigGioSan14} authors use lemma \ref{lem:lio:iso} to prove the existence of solutions to some isotropic  quasilinear problems.

It is worth to notice, that in the proof of the  lemma above  authors essentially use the fact that function $A$ satisfies $\Delta_2\nabla_2$ condition, which is guaranteed by condition \eqref{ass:aAlm}. Isotropic Young function satisfying globally $\Delta_2\nabla_2$ condition is bounded by some power functions  with power $1<p<\infty$. If $A$ satisfies  $\Delta_2\nabla_2$ then $\WLGspace[A]$ is a reflexive, separable Banach space (see e.g. \cite{Ada03}).

  There are also papers, where authors consider non-reflexive spaces, e.g. \cite{AlvCar22}. In this case instead of condition \eqref{cond:vanishing} authors use the assumption  \eqref{ass:lieb}  (see \cite{Lie83}) and   assume that the sequence $\left\{\int_{\R^N}A^{\ast}(|u_n|)\,dx\right\}$ is bounded. 

\begin{lemma}[Theorem. 1.3 in \cite{AlvCar22}]
	\label{lem:lio:nonref}
	Let $A, B$ be a N-functions, $A^*$ be a Sobolev conjugate of $A$ and 
		\[\lim_{t\to 0}\frac{B(t)}{A(t)}=0\quad\text{and}\quad \lim_{t\to 0}\frac{B(t)}{A^{\ast}(t)}=0.\]
If  $\{u_n\}\subset\WLGspace[A](\R^N)$ is a sequence such that 
$\left\{\int_{\R^N}A(|u_n|)\,dx\right\}$ and $\left\{\int_{\R^N}A^{\ast}(|u_n|)\,dx\right\}$ are bounded, and for each $\varepsilon>0$ we have
\begin{equation}
	\label{ass:lieb}
	\tag{$L_2$}
	\text{meas}(|u_n|>\varepsilon)\to 0 \quad as\quad n\to\infty,
\end{equation}
then

\[\int_{\R^N}B(u_n)\to 0 \quad as\quad n\to\infty.\]
\end{lemma}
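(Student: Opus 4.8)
The plan is to estimate $\int_{\R^N} B(u_n)\,dx$ by cutting $\R^N$ into three pieces according to the size of $|u_n|$: a set where $|u_n|$ is small, a set where it is large, and an intermediate annular set. Since $B$ is even the integrand equals $B(|u_n|)$, and since $B$ is an N-function it is nondecreasing on $[0,\infty)$; both facts are used repeatedly. Fix $\eta>0$; I want to produce $\varepsilon>0$, a threshold $M>\varepsilon$, and an index $n_0$ so that the three contributions are each at most $\eta/3$ for $n\ge n_0$.

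On the small set $\{|u_n|\le\varepsilon\}$ I would use the hypothesis $\lim_{t\to0}B(t)/A(t)=0$: for any $\delta>0$ there is $\varepsilon>0$ with $B(t)\le\delta A(t)$ on $[0,\varepsilon]$, so
\[
\int_{\{|u_n|\le\varepsilon\}}B(|u_n|)\,dx\le\delta\int_{\R^N}A(|u_n|)\,dx\le\delta\,C_A,
\]
where $C_A=\sup_n\int_{\R^N}A(|u_n|)\,dx<\infty$ by hypothesis. Symmetrically, on the large set $\{|u_n|>M\}$ I would invoke the comparison of $B$ with the Sobolev conjugate $A^{\ast}$ in the tail: for any $\delta>0$ there is $M>0$ with $B(t)\le\delta A^{\ast}(t)$ for $t\ge M$, hence
\[
\int_{\{|u_n|>M\}}B(|u_n|)\,dx\le\delta\int_{\R^N}A^{\ast}(|u_n|)\,dx\le\delta\,C_{A^{\ast}},
\]
with $C_{A^{\ast}}=\sup_n\int_{\R^N}A^{\ast}(|u_n|)\,dx<\infty$. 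For the intermediate set $\{\varepsilon<|u_n|\le M\}$ monotonicity of $B$ gives $B(|u_n|)\le B(M)$ pointwise, so that
\[
\int_{\{\varepsilon<|u_n|\le M\}}B(|u_n|)\,dx\le B(M)\,\text{meas}\bigl(|u_n|>\varepsilon\bigr),
\]
and assumption \eqref{ass:lieb} drives the right-hand side to $0$ as $n\to\infty$.

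The only real subtlety, and the step I expect to be the main obstacle, is assembling these bounds in the correct order, since the middle term carries the factor $B(M)$, which is finite but may be large. I would therefore first fix $\delta>0$ small enough that $\delta C_A\le\eta/3$ and $\delta C_{A^{\ast}}\le\eta/3$; this choice of $\delta$ then fixes $\varepsilon$ through the limit at $0$ and $M$ through the limit at infinity, and with these the constant $B(M)$ is determined. Only after $\varepsilon$ and $M$ are frozen do I apply \eqref{ass:lieb} to select $n_0$ with $B(M)\,\text{meas}(|u_n|>\varepsilon)\le\eta/3$ for all $n\ge n_0$. Summing the three estimates gives $\int_{\R^N}B(u_n)\,dx\le\eta$ for $n\ge n_0$, and since $\eta>0$ was arbitrary the claimed convergence follows. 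The point to watch throughout is precisely this dependence $\delta\mapsto(\varepsilon,M)\mapsto B(M)\mapsto n_0$: the vanishing-measure hypothesis may be used only once the tail comparison with $A^{\ast}$ has pinned down $M$.
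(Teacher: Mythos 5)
Your argument is correct and is essentially the one the paper relies on: the paper does not reprove this quoted lemma, but its Theorem \ref{thm:lions} generalizes it via exactly your three-region decomposition $\{|u_n|\le\varepsilon\}\cup\{\varepsilon<|u_n|\le M\}\cup\{|u_n|>M\}$, with the two ratio limits controlling the outer regions, the vanishing measure of the middle region (here supplied directly by \eqref{ass:lieb}) controlling the rest, and your insistence on the order $\delta\mapsto(\varepsilon,M)\mapsto n_0$ being precisely the point that matters. One remark: you tacitly read the second hypothesis as $\lim_{t\to\infty}B(t)/A^{\ast}(t)=0$ rather than the $t\to0$ printed in the statement; that is indeed the intended (and necessary) hypothesis, since as printed the statement gives no control of $B$ against $A^{\ast}$ for large $t$ and your tail estimate would have nothing to stand on.
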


In \cite{Wro22} author uses the lemma similar to lemma \eqref{lem:lio:iso}, but for sequences from anisotropic Orlicz-Sobolev spaces, to find solutions of the  anisotropic  quasilinear problem
\begin{equation} \tag{AQP} \label{eq:AQP}
	-\text{div} (\nabla \Phi(\nabla u))+V(x)N'(u)=f(u),\quad \text{ where }
	u\in\WLPhispace(\R^n),
\end{equation} 
where $\Phi$ is an anisotropic n-dimensional N-function (see more in \cite{BarCia17}), satisfying $\Delta_2\nabla_2$ condition.

In \cite{SilCarAlbBah21} authors prove Lion's type lemma for reflexive fractional Orlicz-Sobolev spaces, while in  \cite{BahOuElf22} authors prove it for non-reflexive fractional Orlicz-Sobolev spaces.

\section{Main Theorem}
In this paper we generalize the Lions-type lemmas \ref{lem:liolap}, \ref{lem:lio:iso}, \ref{lem:lio:nonref}, we mentioned in the introduction.   The only assumption on functions is that they are Lebesgue measurable finite and  vanish only at zero.  It is worth to notice, that they can have growth which is not bounded by polynomials, so it will be possible to use this lemma also in non-reflexive spaces.  In the proof of the following lemma we use some techniques from \cite{SilCarAlbBah21}.

\begin{theorem}\label{thm:lions}
	Assume that $\Phi_1,\Phi_2, \Psi:\R^n\to[0,\infty)$ are Lebesgue-measurable  functions vanishing only in zero, satisfying
	\begin{equation}\label{eq:phi/psi=0inzero}\tag{$\Psi_1$}
	 	\lim_{|v|\to 0} \frac{\Psi(v)}{\Phi_1(v)}=0,
	\end{equation}
	\begin{equation}\label{eq:phi/psi=0ininfty}\tag{$\Psi_2$}
	\lim_{|v|\to \infty} \frac{\Psi(v)}{\Phi_2(v)}=0,
\end{equation}

 Let $\{u_k\}$ be a sequence of Lebesgue-measurable functions  $u_k:\R^N\to\R^n$ such that $\left\{\int_{\R^N}\Phi_1(u_k)\right\},$   $\left\{\int_{\R^N}\Phi_2(u_k)\right\}$ are bounded and
\begin{equation}\label{eq:lionslimit}
\lim_{k\to\infty}\left[\sup_{y\in\R^N}\int_{B_r(y)}\Phi_1(u_k)\right]=0
\end{equation}
for some $r>0$.
Then \[
\lim_{k\to\infty}\int_{\R^N}\Psi(u_k)=0
\]
	\end{theorem}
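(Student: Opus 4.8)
The plan is to estimate $\int_{\R^N}\Psi(u_k)$ by splitting $\R^N$ according to the size of $|u_k|$, absorbing the two tails with the hypotheses \eqref{eq:phi/psi=0inzero} and \eqref{eq:phi/psi=0ininfty} and leaving only a bounded ``middle'' range of values on which the vanishing condition \eqref{eq:lionslimit} has to do the work. Fix $\varepsilon>0$. By \eqref{eq:phi/psi=0inzero} there is $\delta>0$ with $\Psi(v)\le\varepsilon\,\Phi_1(v)$ whenever $|v|\le\delta$, and by \eqref{eq:phi/psi=0ininfty} there is $M>\delta$ with $\Psi(v)\le\varepsilon\,\Phi_2(v)$ whenever $|v|\ge M$. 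Writing $\R^N$ as the disjoint union of $\{|u_k|\le\delta\}$, $\{\delta<|u_k|<M\}$ and $\{|u_k|\ge M\}$, the first and third pieces are controlled immediately:
\[
\int_{\{|u_k|\le\delta\}}\Psi(u_k)\le\varepsilon\int_{\R^N}\Phi_1(u_k)\le\varepsilon\,C_1,\qquad \int_{\{|u_k|\ge M\}}\Psi(u_k)\le\varepsilon\int_{\R^N}\Phi_2(u_k)\le\varepsilon\,C_2,
\]
where $C_1,C_2$ bound $\{\int\Phi_1(u_k)\}$ and $\{\int\Phi_2(u_k)\}$. Since $\varepsilon$ is arbitrary, it remains to show that for each fixed pair $0<\delta<M$ the middle contribution $\int_{\{\delta<|u_k|<M\}}\Psi(u_k)$ tends to $0$ as $k\to\infty$.

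For the middle range I would adapt the covering scheme of \cite{SilCarAlbBah21}. Choose a countable family $\{B_r(y_i)\}_i$ covering $\R^N$ with uniformly bounded overlap, so that $\sum_i\int_{B_r(y_i)}f\le\theta(N)\int_{\R^N}f$ for every $f\ge0$. On the annulus $\{\delta\le|v|\le M\}$ the three functions should be mutually comparable: here the assumption that they vanish only at $0$ is used to bound $\Phi_1$ away from $0$ by some $c>0$ and to bound $\Psi$ above by some $P<\infty$, giving $\Psi(v)\le(P/c)\,\Phi_1(v)$ on this annulus. Localising to each ball and combining this comparison with an interpolation against the local mass $\int_{B_r(y_i)}\Phi_2(u_k)$, one would estimate $\int_{B_r(y_i)\cap\{\delta<|u_k|<M\}}\Psi(u_k)$ by a product of a factor carrying $\sup_{y}\int_{B_r(y)}\Phi_1(u_k)$, which vanishes by \eqref{eq:lionslimit}, and a factor that sums over the covering to something controlled by $\int_{\R^N}\Phi_2(u_k)$.

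The main obstacle is precisely this summation over the infinite covering: it must not destroy the smallness supplied by \eqref{eq:lionslimit}. This is the only point where the global bound on $\{\int\Phi_2(u_k)\}$ is genuinely needed, and the delicate step is to balance the interpolation exponents so that the factor built from $\sup_y\int_{B_r(y)}\Phi_1(u_k)$ survives with a strictly positive power while the remaining sum stays bounded uniformly in $k$; securing the annulus comparisons above mere measurability is the second place where care is required. Once this balance is achieved, letting $k\to\infty$ forces the middle contribution to $0$, and together with the two tail bounds and the arbitrariness of $\varepsilon$ this yields $\int_{\R^N}\Psi(u_k)\to0$.
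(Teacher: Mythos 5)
Your decomposition of $\R^N$ into $\{|u_k|\le\delta\}$, $\{\delta<|u_k|<M\}$, $\{|u_k|\ge M\}$ and your treatment of the two tails coincide exactly with the paper's argument. The divergence is in the middle range, and that is precisely where your text stops being a proof: you describe an interpolation-plus-covering scheme in the spirit of \cite{SilCarAlbBah21} and then explicitly defer the ``delicate step'' of balancing exponents so that the sum over the bounded-overlap cover stays bounded while a positive power of $\sup_{y}\int_{B_r(y)}\Phi_1(u_k)$ survives. That step is the entire content of the lemma, and it cannot be carried out with the data available here. In the classical setting the summation over the cover is rescued by a local Gagliardo--Nirenberg/Sobolev inequality whose right-hand side carries the local $\mathbf{H}^1$ (or Orlicz--Sobolev) norm raised to a power at least $1$, so that summing the local norms is controlled by the global norm; you only have $\int_{B_r(y_i)}\Phi_2(u_k)$, which is an additive quantity of the same type as the one being estimated and provides no such superadditivity. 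The paper takes a different, more elementary route for the middle term: on the annulus $\delta\le|v|\le T$ it bounds $\Phi_1$ below by some $c>0$ and $\Psi$ above by some $C<\infty$, reduces the problem to showing $|\{\delta<|u_k|<T\}|\to 0$, and argues this by contradiction from \eqref{eq:lionslimit}.

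You should know, however, that the obstacle you flagged is genuine and not merely technical --- and it also undermines the paper's own proof. The paper's contradiction step, namely that $|B_k|\to L>0$ forces some fixed ball $B_r(y_0)$ to satisfy $|B_k\cap B_r(y_0)|\ge\sigma>0$ along a subsequence, is exactly the assertion that positive total measure cannot spread out, and it fails without further hypotheses. Take $N=n=1$, $r=1$, $\Phi_1(v)=v^2$, $\Phi_2(v)=v^4$, $\Psi(v)=|v|^3$, and $u_k=\chi_{E_k}$ with $E_k$ a union of $k$ intervals of length $1/k$ at mutual distance greater than $2$. Then $\int_{\R}\Phi_1(u_k)=\int_{\R}\Phi_2(u_k)=1$ and $\sup_{y}\int_{B_1(y)}\Phi_1(u_k)\le 1/k\to 0$, yet $\int_{\R}\Psi(u_k)=1$ for every $k$. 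So neither your interpolation scheme nor the paper's measure-shrinking argument can close the middle estimate as stated: some additional ingredient is needed, such as a gradient bound together with a local Sobolev-type inequality (as in Lemmas \ref{lem:liolap} and \ref{lem:lio:iso}) or the Lieb-type condition \eqref{ass:lieb}.
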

\begin{proof}
	We let  $|A|$ denote the Lebesgue measure of subset $A$.
 Let $\{u_k\}$ be a sequence of Lebesgue-measurable functions   such that $\left\{\int_{\R^N}\Phi_1(u_k)\right\},$   $\left\{\int_{\R^N}\Phi_2(u_k)\right\}$ are bounded.
 
  Define \[M_1=\sup_{k}\int_{\R^N}\Phi_1(u_k)\quad M_2=\sup_{k}\int_{\R^N}\Phi_2(u_k).\]
  
Note that $M_1,~M_2<\infty$.
 Fix $\varepsilon>0$.
  From \eqref{eq:phi/psi=0inzero}, there exists $\delta>0$, such that 
\begin{equation}\label{eq:psiphieps}
  \frac{\Psi(v)}{\Phi_1(v)}\leq \frac{\varepsilon}{3M_1}
\end{equation}
  for all $|v|\leq \delta$.
  
   Similarly from \eqref{eq:phi/psi=0ininfty}, there exists $T>0$, such that 
\begin{equation}\label{eq:psiphineps}
  \frac{\Psi(v)}{\Phi_2(v)}\leq \frac{\varepsilon}{3M_2}
 \end{equation}
  for all $|v|\geq T$.
 Let us denote: 
 \[
 A_k=\left\{x\in\R^N\colon|u_k(x)|\leq \delta\right\},\quad  B_k=\left\{x\in\R^N\colon\delta<|u_k(x)|<T\right\},\quad  C_k=\left\{x\in\R^N\colon|u_k(x)|\geq T\right\}.  \]
 Then \[
 \int_{\R^N}\Psi(u_k)=\int_{A_k}\Psi(u_k)+\int_{B_k}\Psi(u_k)+\int_{C_k}\Psi(u_k).
 \]
By \eqref{eq:psiphieps} we obtain
  \[
\int_{A_k}\Psi(u_k)\leq \frac{\varepsilon}{3M_1}\int_{R^N}\Phi_1(u_k)\leq \frac{\varepsilon}{3}
\]
  and by  \eqref{eq:psiphineps}
  \[
 \int_{C_k}\Psi(u_k)\leq \frac{\varepsilon}{3M_2}\int_{R^N}\Phi_2(u_k)\leq \frac{\varepsilon}{3}.
 \]
 We need to show that \[\int_{B_k}\Psi(u_k)\leq\frac{\varepsilon}{3}.\] 
 We will first show that $|B_k|\to0$ as $k\to\infty$.

 Assume, by contradiction, that (up to subsequence) \[|B_k|\to L>0.\]
 Then, for some subsequence $\{u_k\},$ there exist $y_0\in\R^N$ and $\sigma>0$ such that 
 \begin{equation}
 	\label{eq:|BkBr|>>0}
 	|B_k\cap B_r(y_0)|\geq \sigma>0.
 \end{equation}
Let
\[
C_{\Psi}=\max_{\delta\leq|v|\leq T}\Psi(v),\quad c_{\Phi}=\min_{\delta\leq|v|\leq T}\Phi_1(v),\quad  C_{\Phi}=\max_{\delta\leq|v|\leq T}\Phi_1(v).
\]
We observe that   \[
 \int_{B_r(y_0)}\Phi_1(u_k)\geq \int_{B_r(y_0)\cap B_k}\Phi_1(u_k)\geq c_{\Phi}|B_k\cap B_r(y_0)|.
 \]
 Hence and by assumption \eqref{eq:lionslimit} we have that 
 \[
 |B_k\cap B_r(y_0)|\to 0 \quad\text{as }k\to\infty
 \]
which contradicts with \eqref{eq:|BkBr|>>0}.

Since $|B_k|\to0$ as $k\to\infty$, we have that there exists $k_0$ such that for all $k\geq k_0$


\[|B_k|<c_{\Phi}\left(3C_{\Phi}C_{\Psi}\right)^{-1}\varepsilon.\]

Then 
\[
|B_k|\leq \left( c_{\Phi}\right)^{-1}\int_{B_k}\Phi(u_k)\leq C_{\Phi}\left(c_{\Phi}\right)^{-1}|B_k|
\]
and
\[
\int_{B_k}\Psi(u_k)\leq C_{\Psi}\left(c_{\Phi}\right)^{-1}\int_{B_k}\Phi(u_k)\leq C_{\Psi}C_{\Phi}\left(c_{\Phi}\right)^{-1}|B_k|<\frac{\varepsilon}{3}.
\]
\end{proof}

\begin{remark}
Note that what matters in this theorem (just as in the concentration-compactness lemma of Lions 
in \cite{Lie83}) is the behavior of the integral, not the space. 
\end{remark}

\bibliographystyle{unsrt}  
\bibliography{references}

\end{document}